\newtheorem{theorem}{Theorem}
 \newtheorem{lemma}{Lemma}
  \newtheorem{definition}{Definition}
    \newtheorem{remark}{Remark}
\newtheorem{proposition}{Proposition}
 \newtheorem{problem}{Problem}
    \newcommand{\bnu}{\mbox{\boldmath{$\nu$}}}
  \newcommand{\bx}{\mbox{\boldmath{$x$}}}
  \date{ }
\begin{document}

\title{On the relationship
between alternative variational \\formulations of a frictional contact model}

          \author{Andaluzia Matei\footnote{This is a preprint. To cite the final version: A. Matei, On the relationship between alternative variational formulations of a frictional contact model, https://doi.org/10.1016/j.jmaa.2019.123391, Journal of Mathematical Analysis and Applications, Volume 480, Issue 1, 1 December 2019, 123391} \\Department of Mathematics, University of Craiova\\
 A.I. Cuza 13, 200585, Craiova, Romania\\ E-mail: andaluziamatei@inf.ucv.ro}

 \maketitle

\begin{abstract} We consider a frictional contact model, mathematically described by means of a nonlinear boundary value problem in terms of PDE. We draw the attention to three possible variational formulations of it.  One of the variational formulations is a variational inequality of the second kind and the other two are mixed variational formulations with Lagrange multipliers in dual spaces. As main novelty, we establish the relationship between these three variational formulations. We also pay attention to the recovery of the formulation in terms of PDE starting from the mixed variational formulations.
\end{abstract}

\noindent\textbf{Key words:}
   boundary value problem, frictional contact, variational inequality of the second kind, mixed variational formulation, dual Lagrange multipliers, weak solutions.

 \noindent\textbf{AMS subject classification}: 35J65, 49J40, 74M10, 74M15.

\section{Introduction}


A large variety of real world phenomena involves bodies in interaction. Such phenomena are described by means of the contact models, see, e.g., \cite{L13,KW,WRIG}. From the mathematical point of view, the contact models are nonlinear problems
described by differential equations with nonlinear boundary conditions, see \cite{DL} or, more recently, see for instance \cite{HS02,SST}. The formulations of the contact phenomena by means of the boundary value problems are called strong formulations; the solutions of the strong formulations are called strong solutions.  Due to their complexity, generally, the contact problems don't have strong solutions. Thus, in the analysis of the contact models under less restrictive conditions on the data, the concept of weak solution is crucial. The interest on the variational approach is due to computational reasons.

For a frictional contact model we can deliver several variational formulations.
It is well known, for instance, that the bilateral frictional contact problems lead to {\it variational inequalities of the second kind}. At the same time such models lead to variational systems with Lagrange multipliers called {\it mixed variational formulations}.
Each solution of a variational formulation will be called a weak solution of the model. So, we can define several weak solutions for a model.

What is the relationship between the weak solutions corresponding to different variational formulations? Is it possible to recover the strong formulation of the model starting from the mixed variational formulations? These two questions will be answered in the present paper by means of an experiment  involving an antiplane frictional contact model.
Precisely, we consider the following boundary value problem.
\begin{problem} \label{bvp}
Find $u:\bar{\Omega} \to \mathbb R$ such that
\begin{eqnarray}
\label{c1} -\xi\Delta u(\bx) =f_0(\bx) \qquad\qquad\quad\qquad\qquad \mbox{\^{\i}n } \Omega, \\[2mm]
u(\bx)=0 \qquad\qquad\quad\qquad \qquad\,\,\,\,\, \mbox{ pe } \Gamma_1, \label{c1bis}\\[2mm]
\xi \frac{\partial u}{\partial \nu}(\bx) = f_2(\bx) \qquad\qquad\,\,\,\,\,\,\,\qquad\quad \mbox{ pe } \Gamma_2,\label{c2} \\[2mm]
\begin{array}{c}\vspace{2mm}
\left|\xi\displaystyle\frac{\partial u}{\partial \nu}(\bx)\right|\leq g,\quad
\xi\displaystyle\frac{\partial u}{\partial \nu}(\bx)=-g\frac{u(\bx)}{|u(\bx)|}\quad \mbox{ if }  u(\bx)\neq 0
\end{array}\qquad\,\, \,\,\,\, \mbox{ pe } \Gamma_3.\label{c3}
\end{eqnarray}
\end{problem}
Problem \ref{bvp} is a boundary value problem with nonlinear boundary conditions. The formulation (\ref{c1})-(\ref{c3}) is called the strong formulation of Problem \ref{bvp}. Herein $\Omega \subset \mathbb R^2$   is a bounded domain with smooth boundary $\Gamma,$ partitioned in three measurable parts with positive measure, $\Gamma_i$, $i\in\{1,2,3\},$
$g>0,$  $\xi>0,$ $f_0 \in L^2(\Omega),$ $f_2 \in L^2(\Gamma_2),$ and, as usual, $\frac{\partial u}{\partial \nu}=\nabla u\cdot \bnu,$ $\bnu$ denoting the unit outward normal vector at $\Gamma$. Problem \ref{bvp} models the frictional contact between a linearly elastic cylindrical body and a rigid foundation in the antiplane context, under the small deformation hypotheses; see, e.g., \cite{SM09} for details on the frictional antiplane models in contact mechanics.

Three possible variational formulations of Problem \ref{bvp} are described in the present paper. One of them, a variational inequality of the second kind, is a classical one. Another one is a variational formulation by means of Lagrange multipliers in the dual of a subspace of $H^{1/2}(\Gamma).$ As it is known, the weak solution defined via variational inequalities of the second kind can be approximated by means of a regularization technique. In contrast, the weak solution defined via mixed variational formulations with Lagrange multipliers in the dual of a subspace of $H^{1/2}(\Gamma)$ can be approximated directly, by means of  modern numerical techniques, like the primal-dual active set strategy, see, e.g., \cite{HMW07} and the references therein. In addition to the aforementioned variational formulations, in the present paper we deliver a new mixed variational formulation which is interesting in its own but it is also helpful to establish a connection between the mixed variational formulations and the classical variational formulation.
After we study the relationship between the three weak formulations, assuming enough smoothness, we discuss the recovery of the strong formulation starting from the mixed variational formulations.

The present study is important for the numerical treatment of the model. Our analysis confirms that, if we approximate the solutions of the mixed variational formulations, then we really approximate the solutions of the mechanical model.

The rest of the paper has the following structure. Section 2 contains some preliminaries. Section 3  address  three variational formulations of Problem \ref{bvp}, discussing the solvability in every case. In Section 4 we analyze the relationship between the  variational formulations we present in Section 3. The last section is devoted to the recovery of the strong formulation assuming enough smoothness of the data and weak solutions.

\section{Preliminaries}
Let $(X, (\cdot,\cdot)_X, \|\cdot\|_X)$ and $(Y, (\cdot,\cdot)_Y, \|\cdot\|_Y)$ be  two Hilbert spaces.
\begin{definition} A bilinear form $e:X\times Y\to \mathbb R$ is continuous (of rank $M_e$) if there exists $M_e>0$ such that
\begin{equation*}
|e(v,\mu)|\leq M_e\|v\|_X\|\mu\|_{Y}\, \mbox{ for all } v\in X,\,\mu\in Y.
\end{equation*}
\end{definition}
\begin{definition} A bilinear form $e:X\times X\to \mathbb R$ is $X-$elliptic (of rank $m_e$) if there exists $m_e>0$ such that
\begin{equation*}
e(u,u)\geq m_e\|u\|^2_X\quad \mbox{ for all }u\in X.
\end{equation*}
\end{definition}

Let us consider the following mixed variational problem.

\begin{problem}\label{ap} Find $u\in X$ and $\lambda\in \Lambda$
such that
\begin{eqnarray}
   a(u,v)+b(v,\lambda)&=&(f,\,v)_X\qquad\,
    \mbox{ for all } v\in X,\label{l1}\\
b(u,\mu-\lambda)&\leq&0\qquad\qquad\,\,\,\,\mbox{ for all } \mu\in
\Lambda,\label{l2}
 \end{eqnarray}
\end{problem}
where:
\begin{eqnarray}\label{iA1}
&&\bullet\,\,\, a:X\times X \to \mathbb R \mbox{ is a symmetric bilinear continuous (of rank $M_a$)}\nonumber\\
&&\mbox{and $X-$elliptic (of rank $m_a$) form;}\\
&& \bullet\,\,\, b:X\times Y\to {R} \mbox{ is a bilinear continuous (of rank $M_b$) form,}\label{ib1}\\
 &&\mbox{and, in addition,}\nonumber\\[2mm]
&&\exists\, \alpha>0:\displaystyle\inf_{\mu\in Y,
  \mu\neq 0_{Y}}\,\sup_{v\in X,v\neq 0_X}\,\frac{b(v,\mu)}{\|v\|_X\|\mu\|_{Y}}\geq \alpha;\label{ib3}\\
&&\bullet\,\,\, \Lambda \text{ is a closed convex subset of } Y \text{
that contains }0_{Y}.\label{iL}
\end{eqnarray}
The following existence and uniqueness result takes place.

\begin{theorem}\label{abs} Assume
$(\ref{iA1})$--$(\ref{iL}).$  Then, Problem $\ref{ap}$ has a unique solution $(u,\lambda)$  in $X\times \Lambda.$
\end{theorem}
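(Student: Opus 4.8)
The plan is to eliminate $u$ from the system and reduce Problem \ref{ap} to a classical elliptic variational inequality governed by a single coercive bilinear form. First, for every fixed $\mu\in Y$, the form $a$ being symmetric, continuous and $X$-elliptic, the Lax--Milgram lemma yields a unique $u_\mu\in X$ satisfying $a(u_\mu,v)+b(v,\mu)=(f,v)_X$ for all $v\in X$. Splitting $u_\mu=w+z(\mu)$, where $w\in X$ solves $a(w,v)=(f,v)_X$ and $z(\mu)\in X$ solves $a(z(\mu),v)=-b(v,\mu)$ for all $v\in X$, one sees that $z\colon Y\to X$ is linear and bounded, with $\|z(\mu)\|_X\le (M_b/m_a)\|\mu\|_Y$.

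Next --- and this is the core of the argument --- I would introduce $c\colon Y\times Y\to\mathbb R$, $c(\lambda,\mu):=a(z(\lambda),z(\mu))$. Symmetry of $a$ makes $c$ symmetric, and $c$ is plainly bilinear and continuous. Its $Y$-ellipticity is exactly where the inf--sup condition (\ref{ib3}) is used: since $b(v,\lambda)=-a(z(\lambda),v)$, continuity of $a$ gives $\sup_{v\neq 0_X} b(v,\lambda)/\|v\|_X\le M_a\|z(\lambda)\|_X$, while (\ref{ib3}) gives $\sup_{v\neq 0_X} b(v,\lambda)/\|v\|_X\ge\alpha\|\lambda\|_Y$; hence $\|z(\lambda)\|_X\ge(\alpha/M_a)\|\lambda\|_Y$ and consequently $c(\lambda,\lambda)=a(z(\lambda),z(\lambda))\ge m_a\|z(\lambda)\|_X^2\ge (m_a\alpha^2/M_a^2)\|\lambda\|_Y^2$.

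Then I would rewrite the inequality (\ref{l2}). From $a(z(\lambda),z(\mu))=-b(z(\lambda),\mu)$ one gets $b(u_\lambda,\mu-\lambda)=b(w,\mu-\lambda)-c(\lambda,\mu-\lambda)$, so (\ref{l2}) is equivalent to finding $\lambda\in\Lambda$ with
\[
c(\lambda,\mu-\lambda)\ge \ell(\mu-\lambda)\qquad\mbox{ for all }\mu\in\Lambda,
\]
where $\ell(\cdot):=b(w,\cdot)$ is a continuous linear functional on $Y$. Since $c$ is continuous, symmetric and $Y$-elliptic, and $\Lambda$ is a closed convex subset of the Hilbert space $Y$ which is nonempty (it contains $0_Y$), the Stampacchia theorem on elliptic variational inequalities provides a unique such $\lambda$. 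Putting $u:=u_\lambda$ gives a solution $(u,\lambda)\in X\times\Lambda$ of Problem \ref{ap}; conversely, any solution has $u$ determined from $\lambda$ by (\ref{l1}) and $\lambda$ solving the reduced inequality, which forces uniqueness.

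The step I expect to be the main obstacle is establishing the $Y$-ellipticity of $c$, i.e. the two-sided control of $\sup_{v} b(v,\lambda)/\|v\|_X$ that converts the inf--sup constant $\alpha$ into a coercivity constant; once this is in hand the remainder is a routine combination of the Lax--Milgram lemma and the Stampacchia theorem. A more abstract alternative would be to quote directly a Brezzi-type saddle point result for variational inequalities with Lagrange multipliers, but the elimination above keeps everything within the tools recalled in Section 2.
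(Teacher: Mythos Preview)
Your argument is correct. The elimination of $u$ via Lax--Milgram, the construction of the reduced form $c$, the derivation of its $Y$-ellipticity from the inf--sup condition, and the final appeal to Stampacchia's theorem all go through as you describe; the computation $b(z(\lambda),\mu)=-a(z(\mu),z(\lambda))=-c(\lambda,\mu)$ that underlies your rewriting of (\ref{l2}) is exactly right.

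The paper, however, does not prove Theorem~\ref{abs} at all: it simply states that ``the proof is based on the saddle point theory'' and refers to \cite{ET,HHN96}. In that approach one considers the Lagrangian $\mathcal{L}(v,\mu)=\tfrac12 a(v,v)-(f,v)_X+b(v,\mu)$ on $X\times\Lambda$, shows (using convexity--concavity, coercivity in $v$, and the inf--sup condition for boundedness of minimizing sequences in $\mu$) that it admits a saddle point, and then reads off (\ref{l1})--(\ref{l2}) from the saddle-point characterization. Your route is genuinely different: instead of a minimax argument you reduce to a single elliptic variational inequality on $\Lambda$ and invoke Stampacchia. What your approach buys is a self-contained proof using only the two classical lemmas (Lax--Milgram and Stampacchia) rather than the full saddle-point machinery; what the saddle-point route buys is a direct structural interpretation of $(u,\lambda)$ and an immediate link to duality theory. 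Both are standard, and you correctly anticipated the alternative in your closing remark.
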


The proof is based on the saddle point theory; see, e.g., \cite{ET,HHN96}.

Let us associate to the form $b(\cdot,\cdot)$ the linear and continuous operator $B:X\to Y'$ defined as follows: for each $v\in X,$
\begin{equation}\label{B}
\langle Bv,\lambda\rangle_{Y',Y}=b(v,\lambda)\quad\mbox{ for all }\lambda\in Y.
\end{equation}
Moreover, we can associate to $B$ the linear and continuous operator $B^t:Y\to X'$ defined as follows: for each $\lambda\in Y,$
\begin{equation}\label{Bt}
\langle B^t\lambda,v\rangle_{X',X}=\langle Bv,\lambda\rangle_{Y',Y}\quad\mbox{ for all }v\in X.
\end{equation}
Actually, we can write
\begin{equation}\label{yut}
\langle B^t\lambda,v\rangle_{X',X}=\langle Bv,\lambda\rangle_{Y',Y}=b(v,\lambda)\quad\mbox{ for all }v\in X, \lambda\in Y,
\end{equation}
see \cite{BBF13} (pages 210-213) and  \cite{Braess}(page 131).

According to, e.g., \cite{BBF13}(see 4.1.61-4.1.62), keeping in mind (\ref{ib1})-(\ref{ib3}), we have to write
\begin{equation}\label{imp}
Im B^t=(Ker B)^0,
\end{equation}
where, as it is known,
\[Ker B=\{v\in X\,|\, Bv=0\}\]
and
$(Ker B)^0$ denotes, as usual, the polar of $Ker B,$ i.e.,
\begin{eqnarray*}
(Ker B)^0=\{ l\in X'\,|\, \langle l,v\rangle_{X',X}=0\mbox{ for all } v\in Ker B\}.
\end{eqnarray*}
Notice that, (\ref{yut}) yields
\begin{eqnarray}
Ker B&=&\{v\in X\,|\, b(v,\lambda)=0\mbox{ for all }\lambda\in Y\}\label{rB1}\\
&=&\{v\in X\,|\,\langle B^t\lambda,v\rangle_{X',X} =0\mbox{ for all }\lambda\in Y\};\nonumber
\end{eqnarray}
see, e.g., (4.1.52) in \cite{BBF13}.

Moreover,
\begin{equation}\label{izo}
B^t:Y\to (Ker B)^0\subset X'\mbox{ is an isomorphism};
\end{equation}
see, e.g., Theorem 3.6 page 125 and Lemma 4.2 page 131 in \cite{Braess}.

\section{Weak formulations}
In this section we present three possible variational formulations of Problem \ref{bvp}.
\subsection{First variational formulation}
In this first subsection we recall a variational formulation of Problem \ref{bvp} in terms of variational inequalities of the second kind.
\begin{problem}\label{vf1} Find $u_0\in X$ such that
\begin{equation}\label{l:vf1}
a(u_0,v-u_0)+j(v)-j(u_0)\geq (f,v-u_0)_X\quad\mbox{ for all }v\in X,
\end{equation}
\end{problem}
where
\begin{eqnarray}
&X=\{v \in H^1(\Omega) \ |  \ \gamma v=0 \mbox{ a.e. on } \Gamma_1\};\quad  (u, v)_X = ( \nabla u, \nabla v)_{L^2(\Omega)^2};&\label{X}\\
&a:X\times X\to \mathbb R\quad a(u,v)=\xi(u,v)_X;&\label{a}\\[2mm]
& j:X\to \mathbb R\quad j(v)=\int_{\Gamma_3}g|\gamma v|\,d\Gamma;&\label{j}\\[2mm]
& (f,v)_X=\int_{\Omega}f_0\,v\,dx+\int_{\Gamma_2}f_2\,\gamma v\,d\Gamma.&\label{f}
\end{eqnarray}
Everywhere in this paper $\gamma:H^1(\Omega)\rightarrow L^2(\Gamma)$ is the trace operator.

According to the theory of the variational inequalities of the second kind, the following result takes place:
\begin{theorem}\label{t:1}
 Problem \ref{vf1} has a unique solution $u_0\in X.$
\end{theorem}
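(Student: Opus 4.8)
The plan is to apply the classical existence-uniqueness theory for elliptic variational inequalities of the second kind (the Lions--Stampacchia type result for functionals of the form $v \mapsto \tfrac12 a(v,v) + j(v) - (f,v)_X$). The essential point is to verify that the ingredients $(X, a, j, f)$ of Problem~\ref{vf1} satisfy the standard hypotheses: $X$ is a Hilbert space, $a$ is bilinear, symmetric, continuous and $X$-elliptic, $j$ is convex, lower semicontinuous and proper, and $f \in X'$ (equivalently $(f,\cdot)_X$ is a continuous linear functional on $X$).

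\smallskip

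First I would check that $(X, (\cdot,\cdot)_X)$ as defined in~(\ref{X}) is a genuine Hilbert space: $X$ is a closed subspace of $H^1(\Omega)$ because the trace operator $\gamma$ is continuous, so $\{v : \gamma v = 0 \text{ a.e. on } \Gamma_1\}$ is closed; and since $\operatorname{meas}(\Gamma_1) > 0$, a Poincar\'e--Friedrichs inequality shows that $\|\nabla \cdot\|_{L^2(\Omega)^2}$ is a norm on $X$ equivalent to the full $H^1$-norm, so $(\cdot,\cdot)_X$ is an inner product generating a complete space. Next, from~(\ref{a}) the form $a(u,v) = \xi(u,v)_X$ is obviously bilinear and symmetric, and with $\xi > 0$ it is both continuous (rank $M_a = \xi$) and $X$-elliptic (rank $m_a = \xi$), since $a(v,v) = \xi\|v\|_X^2$. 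For the functional $j$ in~(\ref{j}): $v \mapsto \int_{\Gamma_3} g|\gamma v|\,d\Gamma$ is the composition of the continuous linear trace map $X \to L^2(\Gamma_3)$ with the continuous convex functional $w \mapsto g\int_{\Gamma_3}|w|\,d\Gamma$ on $L^2(\Gamma_3)$ (here $g > 0$ is constant), hence $j$ is convex, continuous (in particular weakly lower semicontinuous) and nonnegative, so proper. Finally, $(f,\cdot)_X$ defined by~(\ref{f}) is linear, and continuity follows from the Cauchy--Schwarz inequality together with the continuity of the trace operator into $L^2(\Gamma_2)$ and the assumptions $f_0 \in L^2(\Omega)$, $f_2 \in L^2(\Gamma_2)$; thus $f$ may be identified with an element of $X$ via the Riesz representation theorem.

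\smallskip

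With these verifications in place, the abstract theory (see, e.g., \cite{ET} or \cite{HHN96}) gives a unique $u_0 \in X$ satisfying~(\ref{l:vf1}): existence and uniqueness follow because minimizing the strictly convex, coercive, lower semicontinuous functional $J(v) = \tfrac12 a(v,v) + j(v) - (f,v)_X$ over the Hilbert space $X$ has a unique minimizer, and the variational inequality~(\ref{l:vf1}) is precisely the first-order optimality (Euler) condition for that minimization, which is equivalent to it by convexity. Coercivity is immediate from $X$-ellipticity of $a$ and $j \geq 0$: $J(v) \geq \tfrac{\xi}{2}\|v\|_X^2 - \|f\|_X\|v\|_X \to +\infty$.

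\smallskip

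\textbf{Main obstacle.} There is no deep obstacle here; the statement is a direct citation of standard theory, and the only genuine content is the routine verification of hypotheses. The point that deserves the most care is the $X$-ellipticity of $a$, which hinges on the Poincar\'e-type inequality valid precisely because $\operatorname{meas}(\Gamma_1) > 0$ (this is where that hypothesis from Problem~\ref{bvp} is used); if one instead only had $a$ coercive up to a compact perturbation, one would need a Fredholm-type argument, but that is not needed here since $a(v,v) = \xi\|v\|_X^2$ exactly.
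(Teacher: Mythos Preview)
Your proposal is correct and follows the same approach as the paper: both invoke the standard existence--uniqueness theory for elliptic variational inequalities of the second kind. The paper itself gives no proof at all, simply writing ``For details, see, e.g., \cite{SM09} and the references therein,'' so your explicit verification of the hypotheses (Hilbert structure on $X$ via Poincar\'e--Friedrichs with $\operatorname{meas}(\Gamma_1)>0$, symmetry/continuity/$X$-ellipticity of $a$, convexity and continuity of $j$, continuity of the linear form $(f,\cdot)_X$) is in fact more detailed than what the paper provides.
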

For details, see, e.g., \cite{SM09} and the references therein.

\subsection{Second variational formulation}
 Let us recall a variational formulation of Problem \ref{bvp} in terms of Lagrange multipliers. In addition to the Hilbert space $X,$ given by (\ref{X}), we need in this subsection the following space,
\begin{eqnarray*}
M=\gamma(X)=\{\widetilde{v}\in H^{1/2}(\Gamma)|\,\,\,\mbox{there exists}\,\,\,v\in X \,\,\,\mbox{such that}\,\,\,\widetilde{v}=\gamma\,v \,\,\mbox{a.e. on}\,\,\Gamma\},\label{S}
\end{eqnarray*}
where, recall,
\begin{equation*}
H^{1/2}(\Gamma)=\{\widetilde{v}\in L^2(\Gamma)|\,\,\,\mbox{there exists}\,\,\,v\in H^1(\Omega) \,\,\,\mbox{such that}\,\,\,\widetilde{v}=\gamma\,v \,\,\mbox{a.e. on}\,\,\Gamma\}.
\end{equation*}
 Let us denote by $\mathcal Z:H^{1/2}(\Omega)\to H^1(\Omega)$ the right inverse of the trace operator. As it is known, $\mathcal Z$ is a linear and continuous operator. It is worth to emphasize that
\begin{eqnarray*}
{\mathcal Z}(\gamma v)\in X \quad\mbox{ for all }v\in X,
\end{eqnarray*}
and
\begin{eqnarray}\label{zzz}
\gamma ({\mathcal Z}(\gamma v))=\gamma v\quad\mbox{ for all }v\in X.
\end{eqnarray}
For details on the space $H^{1/2}(\Gamma)$, its structure and related operators, see, e.g., \cite{KJF,Necas}.

We also need the dual of the space $M$ denoted by $Y,$
\begin{equation}\label{Y}
Y=M'.
\end{equation}
The space $M$ is a Hilbert space, see, e.g., \cite{MC}. Furthermore, $Y$ is a Hilbert space being the dual of the Hilbert space $M.$ Everywhere below
$\langle\cdot,\cdot\rangle_{Y,M}$ will denote the dual pairing between $Y$ and $M.$

For a regular enough function $u$ which verifies Problem \ref{bvp}, a Lagrange multiplier $\lambda\in Y$
can be introduced as follows:
\begin{eqnarray*}
\langle \lambda,\widetilde{v} \rangle_{Y,M}=-\int_{\Gamma_3} \xi\frac{\partial u}{\partial \nu}\,\widetilde{v}\,d\Gamma\quad\mbox{ for all }\widetilde{v}\in M.
\end{eqnarray*}
The first mixed variational formulation of Problem \ref{bvp} is the following.
\begin{problem} \label{vf2}
Find $u\in X$ and $\lambda\in \Lambda\subset Y$ such that
\begin{eqnarray}
a(u,v)+\langle\lambda,\gamma v\rangle_{Y,M}&=&(f,v)_X \quad\qquad\mbox{for all}\,\,\,\, v\in X,    \label{l1:vf2}\\
\langle \mu-\lambda,\gamma u\rangle_{Y,M}&\leq& 0\,\qquad\qquad\quad\,\mbox{for all}\,\, \,\,\mu \in \Lambda.\label{l2:vf2}
\end{eqnarray}
\end{problem}
For the definition of  $a(\cdot,\cdot)$  and $f$ see (\ref{a}) and (\ref{f}), respectively. The set of the Lagrange multipliers $\Lambda$ in Problem \ref{vf2} has the following definition.
\begin{eqnarray}\label{L}
\Lambda=\{\mu\in Y\,|\, \langle \mu,\gamma v\rangle_{Y,M}\leq \int_{\Gamma_3}g|\gamma v|\,d\Gamma\mbox{ for all }v\in X\}.
\end{eqnarray}

It is easy to observe that (\ref{iA1}) and (\ref{iL}) are fulfilled.

Let us define
\begin{eqnarray}
 b:X\times Y\to \mathbb R\quad b(v,\mu)=\langle \mu,\gamma v\rangle_{Y,M}\quad\mbox{ for all }v\in X,\,\mu\in Y.\label{b}
\end{eqnarray}
Obviously, the form $b(\cdot,\cdot)$ is a bilinear and continuous form. In addition,
\begin{eqnarray*}
\|\mu\|_Y=\sup_{\overline{w}\in M,\,\overline{w}\neq 0_M}\frac{\langle \mu,\overline{w}\rangle_{Y,M}}{\|\overline{w}\|_M}
\leq \sup_{w\in X,\,\gamma w\neq 0_M}\frac{\langle \mu,\gamma w\rangle_{Y,M}}{\|\gamma w\|_M}.
\end{eqnarray*}

Let $w\in X$ be such that $\gamma w\neq 0_M.$ Then ${\mathcal Z}(\gamma w)\neq 0_X.$ Indeed, otherwise ${\mathcal Z}(\gamma w)=0_X$, and applying the trace operator $\gamma$ we obtain $\gamma({\mathcal Z}(\gamma w))=\gamma 0_X=0_M.$ Using now (\ref{zzz}) it results that $\gamma w=0_M$ which is not true.
Because ${\mathcal Z}$ is linear and continuous, there exists $c>0$ such that,
\[\|{\mathcal Z}(\gamma w)\|_X\leq c\|\gamma w\|_M.\]
As $\gamma w\neq 0_M,$ then
\[\frac{1}{\|\gamma w\|_M}\leq\frac{c}{\|{\mathcal Z}(\gamma w)\|_X},\] the right hand side of this inequality being well defined.

Therefore,
\begin{eqnarray*}
\|\mu\|_Y&\leq& c\sup_{w\in X,\,\mathcal Z(\gamma w)\neq 0_X}\frac{\langle \mu,\gamma w\rangle_{Y,M}}{\|{\mathcal Z}(\gamma w)\|_X}\\
  &=&c\sup_{w\in X,\,\mathcal Z(\gamma w)\neq 0_X}\frac{\langle \mu,\gamma({\mathcal Z}(\gamma w))\rangle_{Y,M}}{\|{\mathcal Z}(\gamma w)\|_X}\\
  &=&c\sup_{w\in X,\,\mathcal Z(\gamma w)\neq 0_X}\frac{b({\mathcal Z}(\gamma w),\mu)}{{\|\mathcal Z}(\gamma w)\|_X}\\
  &\leq& c \sup_{v\in X,\,v\neq 0_X}\frac{b(v,\mu)}{\|v\|_X}.
\end{eqnarray*}
By consequence (\ref{ib1})-(\ref{ib3}) hold true.

The following result is a direct application of Theorem \ref{abs}.
\begin{theorem}\label{t:2} Problem \ref{vf2} has a unique solution $(u,\lambda)\in X\times \Lambda.$
\end{theorem}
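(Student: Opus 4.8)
The plan is to recognize Theorem \ref{t:2} as an immediate consequence of the abstract result Theorem \ref{abs}, so the entire task reduces to verifying that the data of Problem \ref{vf2} satisfy the hypotheses (\ref{iA1})--(\ref{iL}) of Problem \ref{ap}. In other words, I would identify $X$ as the Hilbert space in (\ref{X}), $Y=M'$ as the second Hilbert space, take $a(\cdot,\cdot)$ from (\ref{a}), $b(\cdot,\cdot)$ from (\ref{b}), $f$ from (\ref{f}), and $\Lambda$ from (\ref{L}), and then check the four bulleted conditions one at a time.

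First I would dispatch (\ref{iA1}): the form $a(u,v)=\xi(u,v)_X$ is obviously bilinear and symmetric; continuity with $M_a=\xi$ and $X$-ellipticity with $m_a=\xi$ are immediate from $|a(u,v)|\le\xi\|u\|_X\|v\|_X$ and $a(v,v)=\xi\|v\|_X^2$. The paper has in fact already observed ``(\ref{iA1}) and (\ref{iL}) are fulfilled,'' so for (\ref{iL}) it suffices to note that $\Lambda$ as defined in (\ref{L}) is convex (it is cut out by the family of linear inequalities $\mu\mapsto\langle\mu,\gamma v\rangle_{Y,M}-\int_{\Gamma_3}g|\gamma v|\,d\Gamma\le 0$), closed (each such inequality defines a closed half-space, and $\Lambda$ is their intersection), and contains $0_Y$ (since $\langle 0_Y,\gamma v\rangle_{Y,M}=0\le\int_{\Gamma_3}g|\gamma v|\,d\Gamma$, the right-hand side being nonnegative as $g>0$). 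Then (\ref{ib1}): bilinearity of $b(v,\mu)=\langle\mu,\gamma v\rangle_{Y,M}$ is clear, and continuity follows from $|b(v,\mu)|=|\langle\mu,\gamma v\rangle_{Y,M}|\le\|\mu\|_Y\|\gamma v\|_M\le\|\gamma\|\,\|\mu\|_Y\|v\|_X$, so $M_b=\|\gamma\|_{X\to M}$.

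The only nontrivial point is the inf--sup condition (\ref{ib3}), and this is precisely the computation already carried out in the excerpt immediately before the theorem: using that $M=\gamma(X)$, that the right inverse $\mathcal Z$ of the trace is linear, continuous, and satisfies (\ref{zzz}), one shows for every $\mu\in Y$ that
\[
\|\mu\|_Y\le c\sup_{v\in X,\,v\neq 0_X}\frac{b(v,\mu)}{\|v\|_X},
\]
which rearranges to $\sup_{v\neq 0_X} b(v,\mu)/\|v\|_X\ge (1/c)\|\mu\|_Y$ for all $\mu\neq 0_Y$, hence the inf--sup constant may be taken as $\alpha=1/c>0$. I would simply cite that display as establishing (\ref{ib3}). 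With (\ref{iA1})--(\ref{iL}) all verified, Theorem \ref{abs} applies verbatim and yields a unique $(u,\lambda)\in X\times\Lambda$ solving (\ref{l1})--(\ref{l2}), which are exactly (\ref{l1:vf2})--(\ref{l2:vf2}).

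The main obstacle, such as it is, is the inf--sup estimate, since ellipticity, continuity, and the structure of $\Lambda$ are routine; but as noted, the inf--sup bound has been proved in the lines preceding the statement, relying crucially on the surjectivity of $\gamma:X\to M$ and the boundedness of its right inverse $\mathcal Z$. So the proof of Theorem \ref{t:2} is genuinely just the sentence ``This is a direct application of Theorem \ref{abs}, the hypotheses (\ref{iA1})--(\ref{iL}) having been checked above,'' and I would write it essentially that way, perhaps spelling out the identifications of $X$, $Y$, $a$, $b$, $f$, $\Lambda$ for the reader's convenience.
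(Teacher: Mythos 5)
Your proposal is correct and follows exactly the paper's route: the paper verifies (\ref{iA1}), (\ref{iL}), (\ref{ib1}) and the inf--sup condition (\ref{ib3}) via the right inverse $\mathcal Z$ of the trace operator in the lines preceding the statement, and then declares the theorem a direct application of Theorem \ref{abs}. Nothing is missing.
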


From the numerical point of view, Problem \ref{vf2} is a very convenient variational formulation, see, e.g., \cite{HMW07}.

\subsection{Third variational formulation}
Let us draw the attention on a new variational formulation via Lagrange multipliers. Everywhere in this subsection we denote by $X$
the space given by (\ref{X}), by $X'$ its dual and by $\langle\cdot,\cdot\rangle_{X',X}$ the dual pairing.

Let $\bar{u}$ be a regular enough function which verifies Problem \ref{bvp}.
We introduce a Lagrange multiplier $\bar{\lambda}\in X'$
as follows:
\begin{eqnarray*}
\langle \bar{\lambda},v \rangle_{X',X}=-\int_{\Gamma_3}\xi \frac{\partial \bar{u}}{\partial \nu} \,\gamma v\,d\Gamma\quad\mbox{ for all }v\in X.
\end{eqnarray*}
We deliver  the following mixed variational formulation.
\begin{problem} \label{vf3}
Find $\bar{u}\in X$ and $\bar{\lambda}\in \bar{\Lambda}\subset X'$ such that
\begin{eqnarray}
a(\bar{u},v)+\langle\bar{\lambda},v\rangle_{X',X}&=&(f,v)_X \quad\qquad\mbox{for all}\,\,\,\, v\in X,    \label{l1:vf3}\\
\langle \bar{\mu}-\bar{\lambda},\bar{u}\rangle_{X',X}&\leq& 0\,\qquad\qquad\quad\,\mbox{for all}\,\, \,\,\bar{\mu} \in \bar{\Lambda}\label{l2:vf3}.
\end{eqnarray}
\end{problem}

Again, $a(\cdot,\cdot)$ is given by (\ref{a}) and $f$ is given by (\ref{f}). Obviously, (\ref{iA1}) holds true.
The subset $\bar{\Lambda}$ in Problem \ref{vf3}  has the following definition:
\begin{eqnarray}\label{Lbis}
\bar{\Lambda}=\{\bar{\mu}\in X'\,|\, \langle \bar{\mu},v\rangle_{X',X}\leq \int_{\Gamma_3}g|\gamma v|\,d\Gamma\mbox{ for all }v\in X\}.
\end{eqnarray}
 Clearly, $\bar{\Lambda}$ in (\ref{Lbis}) is a closed convex subset of $X'$ containing $0_{X'},$ so (\ref{iL}) is fulfilled.

Now, we define $\bar{b}:X\times X'\to \mathbb R$ as follows,
\begin{eqnarray}
\bar{b}(v,\bar{\mu})=\langle \bar{\mu}, v\rangle_{X',X}\quad\mbox{ for all }v\in X,\,\bar{\mu}\in X'.\label{bbis}
\end{eqnarray}
This is a bilinear continuous form. In addition,
\begin{eqnarray*}
\|\bar{\mu}\|_X=\sup_{v\in X,\,v\neq 0_X}\frac{\langle \bar{\mu},v\rangle_{X',X}}{\|v\|_X}
 =\sup_{v\in X,\,v\neq 0_X}\frac{\bar{b}(v,\bar{\mu})}{\|v\|_X}.
\end{eqnarray*}
 Clearly, $\bar{b}$ fulfills (\ref{ib1})-(\ref{ib3}).

Applying Theorem \ref{abs} we obtain the following existence and uniqueness result.

\begin{theorem}\label{t:3} Problem \ref{vf3} has a unique solution $(\bar{u},\bar{\lambda})\in X\times \bar{\Lambda}.$
\end{theorem}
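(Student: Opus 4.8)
The plan is to recognise Problem \ref{vf3} as a particular instance of the abstract Problem \ref{ap} --- with $X$ as in (\ref{X}), $Y=X'$, $a$ as in (\ref{a}), $b=\bar{b}$ as in (\ref{bbis}) and $\Lambda=\bar{\Lambda}$ as in (\ref{Lbis}) --- and then to invoke Theorem \ref{abs}. Thus the proof reduces to checking the four groups of hypotheses (\ref{iA1})--(\ref{iL}), essentially all of which have already been recorded in the discussion preceding the statement.

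First I would dispose of (\ref{iA1}): since $a(u,v)=\xi(u,v)_X$ with $\xi>0$, symmetry and bilinearity are evident, continuity holds with $M_a=\xi$ by the Cauchy--Schwarz inequality in $X$, and $X$-ellipticity holds with $m_a=\xi$. Next, for (\ref{ib1})--(\ref{ib3}), I would observe that $\bar{b}$ is bilinear and that $|\bar{b}(v,\bar{\mu})|=|\langle\bar{\mu},v\rangle_{X',X}|\le\|\bar{\mu}\|_{X'}\|v\|_X$, giving continuity with $M_b=1$. The inf--sup condition (\ref{ib3}) is the step at which one should normally be careful, but here it is immediate: as computed just above the statement, $\sup_{v\in X,\,v\neq 0_X}\bar{b}(v,\bar{\mu})/\|v\|_X=\|\bar{\mu}\|_{X'}$ for every $\bar{\mu}\in X'$, whence the infimum over $\bar{\mu}\neq 0_{X'}$ of this ratio divided by $\|\bar{\mu}\|_{X'}$ equals $1$, so (\ref{ib3}) holds with $\alpha=1$. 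It is worth stressing that, unlike in Problem \ref{vf2}, no right inverse of the trace operator is needed at this point --- because the multiplier is now sought in the full dual $X'$ rather than in the dual of a subspace of $H^{1/2}(\Gamma)$, the inf--sup estimate is a tautology --- which is precisely what makes this third formulation analytically lighter than the second.

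Finally I would verify (\ref{iL}): $\bar{\Lambda}$ is the intersection, over $v\in X$, of the sets $\{\bar{\mu}\in X'\mid\langle\bar{\mu},v\rangle_{X',X}\le\int_{\Gamma_3}g|\gamma v|\,d\Gamma\}$, each of which is a closed half-space in $X'$ since $\bar{\mu}\mapsto\langle\bar{\mu},v\rangle_{X',X}$ is continuous on $X'$; hence $\bar{\Lambda}$ is closed and convex. Moreover $0_{X'}\in\bar{\Lambda}$ because $0\le\int_{\Gamma_3}g|\gamma v|\,d\Gamma$ for every $v\in X$, using $g>0$. With (\ref{iA1})--(\ref{iL}) established, Theorem \ref{abs} yields a unique $(\bar{u},\bar{\lambda})\in X\times\bar{\Lambda}$ solving (\ref{l1:vf3})--(\ref{l2:vf3}). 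I do not expect any genuine obstacle in this argument; the only part that in principle requires attention is the verification of the inf--sup inequality, and in the present setting that verification is essentially trivial.
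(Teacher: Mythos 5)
Your proposal is correct and follows exactly the route the paper takes: identify Problem \ref{vf3} as an instance of the abstract Problem \ref{ap} with $Y=X'$, note that the inf--sup condition holds with $\alpha=1$ because $\sup_{v\neq 0_X}\bar{b}(v,\bar{\mu})/\|v\|_X$ is by definition the dual norm of $\bar{\mu}$, check that $\bar{\Lambda}$ is closed, convex and contains $0_{X'}$, and invoke Theorem \ref{abs}. Your additional remark that, unlike for Problem \ref{vf2}, no right inverse of the trace operator is needed is an accurate reading of why this verification is lighter.
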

Problem \ref{vf3} will play a crucial role in the next section.

\section{On the relationship between the weak formulations}

In this section we focus on the relationship between the variational formulations presented in Section 3.

Let $u_0\in X$ be the unique solution of Problem \ref{vf1}. We define $\lambda_0\in X'$ by means of the following relation.
\begin{equation}\label{bl}
\langle \lambda_0,v\rangle_{X',X}=(f,v)_X-a(u_0,v)\quad\mbox{ for all }v\in X.
\end{equation}

\begin{theorem}\label{r:1} Let $(\bar{u},\bar{\lambda})\in X\times \bar{\Lambda}$ be the unique solution of Problem \ref{vf3},
let $u_0$ be the unique solution of Problem \ref{vf1} and let $\lambda_0\in X'$  be defined in (\ref{bl}).
Then,

\[\bar{u}=u_0\,\,\,\mbox{ and }\,\,\,\bar{\lambda}=\lambda_0.\]
\end{theorem}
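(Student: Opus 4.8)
The plan is to show first that the pair $(u_0,\lambda_0)$ is a solution of Problem \ref{vf3}, and then invoke the uniqueness part of Theorem \ref{t:3} to conclude $\bar u=u_0$ and $\bar\lambda=\lambda_0$. So there are three things to check: (i) $\lambda_0\in\bar\Lambda$; (ii) the equation \eqref{l1:vf3} holds with $(u_0,\lambda_0)$; (iii) the inequality \eqref{l2:vf3} holds with $(u_0,\lambda_0)$.

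Step (ii) is immediate: by the very definition \eqref{bl} of $\lambda_0$ we have $a(u_0,v)+\langle\lambda_0,v\rangle_{X',X}=(f,v)_X$ for all $v\in X$, which is exactly \eqref{l1:vf3}. For step (i), I would take an arbitrary $v\in X$ and use the variational inequality \eqref{l:vf1} with the test function $u_0+v$ (legitimate since $X$ is a linear space): this gives $a(u_0,v)+j(u_0+v)-j(u_0)\ge(f,v)_X$, hence, using $j(u_0+v)\le j(u_0)+j(v)$ (subadditivity of $j$, which follows from the triangle inequality for $|\cdot|$ under the integral in \eqref{j}), we obtain $(f,v)_X-a(u_0,v)\le j(v)=\int_{\Gamma_3}g|\gamma v|\,d\Gamma$. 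By \eqref{bl} the left side is $\langle\lambda_0,v\rangle_{X',X}$, so $\langle\lambda_0,v\rangle_{X',X}\le\int_{\Gamma_3}g|\gamma v|\,d\Gamma$ for every $v\in X$, i.e. $\lambda_0\in\bar\Lambda$ by \eqref{Lbis}.

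For step (iii), I must show $\langle\bar\mu-\lambda_0,u_0\rangle_{X',X}\le0$ for all $\bar\mu\in\bar\Lambda$, i.e. $\langle\bar\mu,u_0\rangle_{X',X}\le\langle\lambda_0,u_0\rangle_{X',X}$. Since $\bar\mu\in\bar\Lambda$, applying its defining inequality \eqref{Lbis} with $v=u_0$ gives $\langle\bar\mu,u_0\rangle_{X',X}\le\int_{\Gamma_3}g|\gamma u_0|\,d\Gamma=j(u_0)$. So it suffices to prove $\langle\lambda_0,u_0\rangle_{X',X}=j(u_0)$. The inequality $\langle\lambda_0,u_0\rangle_{X',X}\le j(u_0)$ is the case $v=u_0$ of step (i). For the reverse inequality, I would take $v=2u_0$ and $v=0$ in \eqref{l:vf1}: with $v=0$, \eqref{l:vf1} reads $a(u_0,-u_0)+j(0)-j(u_0)\ge(f,-u_0)_X$, i.e. $(f,u_0)_X-a(u_0,u_0)\le j(0)-j(u_0)$; since $j(0)=0$ this is $\langle\lambda_0,u_0\rangle_{X',X}\le -\,j(u_0)$... which is the wrong direction, so instead I use $v=2u_0$: \eqref{l:vf1} gives $a(u_0,u_0)+j(2u_0)-j(u_0)\ge(f,u_0)_X$, and since $j$ is positively homogeneous, $j(2u_0)=2j(u_0)$, so $a(u_0,u_0)+j(u_0)\ge(f,u_0)_X$, i.e. $j(u_0)\ge(f,u_0)_X-a(u_0,u_0)=\langle\lambda_0,u_0\rangle_{X',X}$. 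Combining with step (i) at $v=u_0$ forces $\langle\lambda_0,u_0\rangle_{X',X}=j(u_0)$, completing (iii).

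Having verified (i)--(iii), $(u_0,\lambda_0)$ solves Problem \ref{vf3}; by Theorem \ref{t:3} the solution is unique, hence $\bar u=u_0$ and $\bar\lambda=\lambda_0$. The only slightly delicate point is the bookkeeping around $j$ (subadditivity plus positive homogeneity, i.e. $j$ is sublinear), and choosing the right test functions $u_0+v$, $0$, $2u_0$ in the variational inequality; once those are in place the argument is routine. I do not expect any genuine obstacle here — the heavy lifting (well-posedness of Problems \ref{vf1} and \ref{vf3}) has already been done in Theorems \ref{t:1} and \ref{t:3}.
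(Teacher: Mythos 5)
Your overall strategy is exactly the paper's: show that $(u_0,\lambda_0)$ solves Problem \ref{vf3} and invoke the uniqueness from Theorem \ref{t:3}. Steps (i) and (ii) are fine and match the paper (the paper states the inequality $a(u_0,v)+j(v)\ge(f,v)_X$ as (\ref{iuti}) without proof; your derivation via the test function $u_0+v$ and subadditivity of $j$ is the standard justification). However, step (iii) as written contains a genuine gap caused by an algebra error. Testing (\ref{l:vf1}) with $v=0$ gives $-a(u_0,u_0)-j(u_0)\ge -(f,u_0)_X$, which rearranges to $(f,u_0)_X-a(u_0,u_0)\ge j(u_0)$, i.e.\ $\langle\lambda_0,u_0\rangle_{X',X}\ge j(u_0)$ --- this is precisely the lower bound you need, not ``$\langle\lambda_0,u_0\rangle_{X',X}\le -j(u_0)$'' as you computed. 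Having discarded $v=0$, you replaced it with $v=2u_0$, but that test yields $\langle\lambda_0,u_0\rangle_{X',X}\le j(u_0)$, the \emph{same} direction as step (i) at $v=u_0$; two upper bounds do not force equality, so your write-up never actually establishes $\langle\lambda_0,u_0\rangle_{X',X}=j(u_0)$, which is essential for (\ref{l2:vf3}).

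The repair is immediate: keep the $v=0$ test with the corrected sign to get $\langle\lambda_0,u_0\rangle_{X',X}\ge j(u_0)$, and combine it with the upper bound from $v=2u_0$ (or from step (i) at $v=u_0$) to conclude equality. This is exactly how the paper obtains its identity (\ref{eguti}), $a(u_0,u_0)+j(u_0)=(f,u_0)_X$, from which (\ref{q1}) follows by the definition (\ref{bl}). With that one correction your argument coincides with the paper's proof.
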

\begin{proof}
Since $u_0\in X$ is the unique solution of Problem \ref{vf1}, then
\begin{eqnarray}
a(u_0,u_0)+j(u_0)=(f,u_0)_X;\label{eguti}\\
a(u_0,v)+j(v)\geq (f,v)_X\quad\mbox{ for all }v\in X.\label{iuti}
\end{eqnarray}
By (\ref{bl}) and (\ref{iuti}) we obtain that
\begin{eqnarray}\label{34}
\langle \lambda_0,v\rangle_{X',X}\leq j(v)\quad\mbox{ for all }v\in X.
\end{eqnarray}
Therefore,
\begin{equation}\label{rel1}
\lambda_0\in \bar{\Lambda}
\end{equation}
where $\bar{\Lambda}$ was defined by (\ref{Lbis}).
Furthermore, by (\ref{bl}) we obtain
\begin{equation}\label{rel2}
a(u_0,v)+\langle \lambda_0,v\rangle_{X',X}=(f,v)_X\quad\mbox{ for all }v\in X.
\end{equation}
On the other hand, due to (\ref{Lbis}), for all $\bar{\mu}\in \bar{\Lambda},$
\begin{eqnarray}\label{q2}
\langle \bar{\mu},u_0\rangle_{X',X}\leq j(u_0),
\end{eqnarray}
and due to (\ref{eguti}) and (\ref{bl}) we have
\begin{eqnarray}\label{q1}
\langle \lambda_0,u_0\rangle_{X',X}=j(u_0).
\end{eqnarray}
By (\ref{q2}) and (\ref{q1}) we obtain
\begin{eqnarray}\label{rel3}
\langle \bar{\mu}-\lambda_0,u_0\rangle_{X',X}\leq 0\quad \mbox{ for all }\bar{\mu}\in \bar{\Lambda}.
\end{eqnarray}
By (\ref{rel1}), (\ref{rel2}) and (\ref{rel3}) we obtain that $(u_0,\lambda_0)$ is a solution of Problem \ref{vf3}.
But, according to Theorem \ref{t:3}, Problem \ref{vf3} has a unique solution $(\bar{u},\bar{\lambda})\in X\times \bar{\Lambda}$. The conclusion is now immediate.
\end{proof}

According to (\ref{B}) and (\ref{Bt}), we can associate to the form $b(\cdot,\cdot)$ introduced in (\ref{b}), two linear and continuous operators $B:X\to Y'$ and $B^t:Y\to X',$ such that, for all $v\in X,$ $\lambda\in Y,$
\begin{equation}\label{uty}
\langle B^t\lambda,v\rangle_{X',X}=\langle Bv,\lambda\rangle_{Y',Y}=b(v,\lambda)=\langle\lambda,\gamma v\rangle_{Y,M}.
\end{equation}
\begin{lemma}\label{lemau}
$Ker B=Ker \gamma$.
\end{lemma}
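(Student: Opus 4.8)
The plan is to prove the two inclusions $\operatorname{Ker} B \subseteq \operatorname{Ker}\gamma$ and $\operatorname{Ker}\gamma \subseteq \operatorname{Ker} B$ directly from the definitions. Recall from (\ref{uty}) that $\langle Bv,\mu\rangle_{Y',Y} = \langle \mu,\gamma v\rangle_{Y,M}$ for all $v\in X$, $\mu\in Y$, so that $v\in\operatorname{Ker} B$ if and only if $\langle\mu,\gamma v\rangle_{Y,M}=0$ for every $\mu\in Y$, while $v\in\operatorname{Ker}\gamma$ means $\gamma v = 0_M$ (as an element of $M = \gamma(X)\subseteq H^{1/2}(\Gamma)$).

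For the inclusion $\operatorname{Ker}\gamma\subseteq\operatorname{Ker} B$: if $v\in X$ satisfies $\gamma v = 0_M$, then for every $\mu\in Y$ we have $\langle Bv,\mu\rangle_{Y',Y} = \langle\mu,\gamma v\rangle_{Y,M} = \langle\mu,0_M\rangle_{Y,M} = 0$; since this holds for all $\mu\in Y$, it follows that $Bv = 0_{Y'}$, i.e.\ $v\in\operatorname{Ker} B$. This direction is immediate.

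For the reverse inclusion $\operatorname{Ker} B\subseteq\operatorname{Ker}\gamma$: let $v\in X$ with $Bv = 0$, so $\langle\mu,\gamma v\rangle_{Y,M}=0$ for all $\mu\in Y = M'$. Since $M$ is a Hilbert space, its dual $M'$ separates points of $M$; equivalently, the canonical map $M\hookrightarrow M''$ is injective, so an element $\overline{w}\in M$ with $\langle\mu,\overline{w}\rangle_{Y,M}=0$ for all $\mu\in M'$ must be $\overline{w}=0_M$. Applying this with $\overline{w}=\gamma v$ gives $\gamma v = 0_M$, hence $v\in\operatorname{Ker}\gamma$.

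The only subtle point — and the one I would state carefully — is the separation argument in the second inclusion: one must invoke that a Hilbert space is embedded in its bidual (a consequence of the Hahn--Banach theorem, or directly of the Riesz representation theorem), so that $\langle\mu,\overline{w}\rangle_{Y,M}=0$ for all $\mu\in Y$ forces $\overline{w}=0_M$. Everything else is a direct unwinding of (\ref{uty}) and the definitions of the kernels. Combining the two inclusions yields $\operatorname{Ker} B = \operatorname{Ker}\gamma$, which completes the proof.
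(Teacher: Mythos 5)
Your proof is correct and follows essentially the same route as the paper: the inclusion $\operatorname{Ker}\gamma\subseteq\operatorname{Ker}B$ is immediate from (\ref{uty}), and the reverse inclusion rests on the fact that $Y=M'$ separates points of the Hilbert space $M$. The paper makes this separation step concrete by testing with the specific functional $\lambda=R_M\gamma v$ (the Riesz map), obtaining $0=\langle\lambda,\gamma v\rangle_{Y,M}=\|\gamma v\|_M^2$, which is exactly the Riesz-representation instance of the abstract argument you invoke.
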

\begin{proof}
Let $v\in Ker B.$ By (\ref{rB1}) we have
\[b(v,\lambda)=0\quad\mbox{ for all }\lambda\in Y.\]
Consequently,
\[\langle\lambda,\gamma v\rangle_{Y,M}=0\quad\mbox{ for all }\lambda\in Y.\]
Then, $\gamma v=0_M.$ Indeed, for $\lambda=R_{M}\gamma v$ we can write
\[0=\langle\lambda,\gamma v\rangle_{Y,M}=\|\gamma v\|_M^2.\]
Herein $R_M:M\to M'$ is the operator which appear in Ritz's Theorem,
see Theorem 4.1.2 page 209 in \cite{BBF13}.
Thus,
\begin{equation}\label{@}
Ker B\subseteq Ker \gamma.
\end{equation}

Let $v\in Ker \gamma.$ Then $\gamma v=0_M.$ Therefore $\langle\lambda,\gamma v\rangle_{Y,M}=0$ for all $\lambda\in Y=M'.$
According to (\ref{uty}), we can write $\langle Bv,\lambda\rangle_{Y',Y}=0$ for all $\lambda\in Y.$ Consequently, $Bv=0_{Y'}.$
So, $v\in Ker B.$ Therefore,
\begin{equation}\label{@@}
Ker \gamma\subseteq Ker B.
\end{equation}
The conclusion is a straightforward consequence of the inclusions (\ref{@}) and (\ref{@@}) .
\end{proof}

\begin{proposition}Let $(\bar{u},\bar{\lambda})\in X\times \bar{\Lambda}$ be the unique solution of Problem \ref{vf3}. Then
\begin{equation}\label{ap2}
\bar{\lambda}\in (Ker B)^0.
\end{equation}
\end{proposition}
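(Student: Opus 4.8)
The plan is to show that $\bar\lambda$ annihilates every element of $\operatorname{Ker}B$, which by Lemma \ref{lemau} equals $\operatorname{Ker}\gamma$. So let $w\in\operatorname{Ker}\gamma$, i.e.\ $\gamma w=0_M$; I must prove $\langle\bar\lambda,w\rangle_{X',X}=0$.

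First I would exploit the saddle-point equation \eqref{l1:vf3} of Problem \ref{vf3}: taking $v=w$ there gives
\[
\langle\bar\lambda,w\rangle_{X',X}=(f,w)_X-a(\bar u,w).
\]
Now I would plug in the explicit forms \eqref{a} and \eqref{f}. Since $\gamma w=0$ on all of $\Gamma$, in particular on $\Gamma_2$, the boundary term in $(f,w)_X$ drops out, so $(f,w)_X=\int_\Omega f_0\,w\,dx$, while $a(\bar u,w)=\xi(\nabla\bar u,\nabla w)_{L^2(\Omega)^2}$. Thus the claim reduces to showing $\int_\Omega f_0\,w\,dx-\xi(\nabla\bar u,\nabla w)_{L^2(\Omega)^2}=0$ for every $w\in X$ with $\gamma w=0$.

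To get this, I would use Theorem \ref{r:1}, which identifies $\bar u=u_0$ and $\bar\lambda=\lambda_0$. Then by the defining relation \eqref{bl} of $\lambda_0$ we have $\langle\bar\lambda,w\rangle_{X',X}=\langle\lambda_0,w\rangle_{X',X}=(f,w)_X-a(u_0,w)$, which is the same identity; alternatively one can argue directly that $w$ and $-w$ both lie in $X$, so testing the variational inequality \eqref{l:vf1} of Problem \ref{vf1} with $v=u_0\pm w$ and noting $j(u_0\pm w)=\int_{\Gamma_3}g|\gamma u_0\pm\gamma w|\,d\Gamma=\int_{\Gamma_3}g|\gamma u_0|\,d\Gamma=j(u_0)$ (because $\gamma w$ vanishes on $\Gamma_3\subset\Gamma$) yields $a(u_0,w)\geq(f,w)_X$ and $a(u_0,-w)\geq(f,-w)_X$, hence $a(u_0,w)=(f,w)_X$, i.e.\ $\langle\bar\lambda,w\rangle_{X',X}=0$. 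Either route works; I would present the one going through \eqref{bl} and the cancellation of $j$, since it is shortest.

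I do not expect any serious obstacle here: the only point that needs a moment's care is the observation that $w\in\operatorname{Ker}\gamma$ forces $\gamma w$ to vanish not merely on $\Gamma_1$ (which is automatic for $w\in X$) but on all of $\Gamma$, and in particular on $\Gamma_2$ and $\Gamma_3$, so that both the Neumann contribution in $f$ and the nonsmooth functional $j$ disappear when evaluated on $w$. Once that is noted, the conclusion $\langle\bar\lambda,w\rangle_{X',X}=0$ for all $w\in\operatorname{Ker}B$ is immediate, and $\bar\lambda\in(\operatorname{Ker}B)^0$ follows by definition of the polar.
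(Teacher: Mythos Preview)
Your argument is correct, but it takes a longer road than the paper's. You pass through Theorem~\ref{r:1} to identify $(\bar u,\bar\lambda)=(u_0,\lambda_0)$ and then exploit Problem~\ref{vf1} by testing \eqref{l:vf1} at $v=u_0\pm w$ and observing $j(u_0\pm w)=j(u_0)$ when $\gamma w=0$. The paper instead uses only the hypothesis $\bar\lambda\in\bar\Lambda$: by \eqref{Lbis} one has $\langle\bar\lambda,v\rangle_{X',X}\le j(v)$ for every $v\in X$, and since $j(-v)=j(v)$ the same inequality applied to $-v$ gives $|\langle\bar\lambda,v\rangle_{X',X}|\le j(v)$; for $v\in\operatorname{Ker}\gamma$ one has $j(v)=0$, hence $\langle\bar\lambda,v\rangle_{X',X}=0$, and Lemma~\ref{lemau} finishes. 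This is shorter and, more importantly, self-contained: it does not invoke Theorem~\ref{r:1} or the variational inequality of the second kind at all, only the definition of $\bar\Lambda$ and the evenness of $j$. Your detour through \eqref{bl} by itself is circular (it merely reproduces the identity you already obtained from \eqref{l1:vf3}); the substance of your proof is entirely in the $v=u_0\pm w$ step, which is valid but heavier machinery than needed.
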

\begin{proof}
Keeping in mind (\ref{j}), we observe that  $j(v)=j(-v)$ for all $v\in X.$ Therefore,
\begin{eqnarray}
|\langle\bar{\lambda},v\rangle_{X',X}|\leq j(v)\quad\mbox{ for all }v\in X.
\end{eqnarray}
And from this, since $j(v)=0$ for all $v\in Ker \gamma,$ we obtain that
\[\langle\bar{\lambda},v\rangle_{X',X}=0\mbox{ for all }v\in Ker \gamma.\]
By Lemma \ref{lemau} it follows that
\[\langle\bar{\lambda},v\rangle_{X',X}=0\mbox{ for all }v\in Ker B\]
i.e. we have (\ref{ap2}).
 \end{proof}

According to (\ref{izo}), there exists a unique $\widetilde{\lambda}\in Y$ such that
\begin{equation}\label{ll}
B^t\widetilde{\lambda}=\bar{\lambda}.
\end{equation}
\begin{theorem}\label{r:2} Let $(\bar{u},\bar{\lambda})\in X\times \bar{\Lambda}$ be the unique solution of Problem \ref{vf3}, let $\widetilde{\lambda}$ given by (\ref{ll}) and let $(u,\lambda)\in X\times \Lambda$ be the unique solution of Problem \ref{vf2}. Then,
 \[\bar{u}=u\,\,\,\mbox{ and }\,\,\,\widetilde{\lambda}=\lambda.\]
\end{theorem}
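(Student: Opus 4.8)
The plan is to show that the pair $(\bar{u},\widetilde{\lambda})$ is a solution of Problem \ref{vf2}; since Theorem \ref{t:2} guarantees that Problem \ref{vf2} has a unique solution $(u,\lambda)\in X\times\Lambda$, the identities $\bar{u}=u$ and $\widetilde{\lambda}=\lambda$ will follow immediately. First I would recall that $\widetilde{\lambda}$ is indeed well defined: by (\ref{ap2}) we have $\bar{\lambda}\in(Ker B)^0$, and by (\ref{izo}) the operator $B^t\colon Y\to(Ker B)^0$ is an isomorphism, so (\ref{ll}) determines a unique $\widetilde{\lambda}\in Y$. Rewriting (\ref{ll}) by means of (\ref{uty}) yields the key identity $\langle\widetilde{\lambda},\gamma v\rangle_{Y,M}=\langle\bar{\lambda},v\rangle_{X',X}$ for all $v\in X$, which serves as the dictionary between the two formulations.

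Next I would check membership, namely $\widetilde{\lambda}\in\Lambda$. For an arbitrary $v\in X$, the key identity together with $\bar{\lambda}\in\bar{\Lambda}$ and (\ref{Lbis}) give $\langle\widetilde{\lambda},\gamma v\rangle_{Y,M}=\langle\bar{\lambda},v\rangle_{X',X}\le\int_{\Gamma_3}g|\gamma v|\,d\Gamma$, so by the definition (\ref{L}) of $\Lambda$ we get $\widetilde{\lambda}\in\Lambda$.

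Then I would verify the two relations of Problem \ref{vf2}. For (\ref{l1:vf2}), substitute the key identity into (\ref{l1:vf3}): $a(\bar{u},v)+\langle\widetilde{\lambda},\gamma v\rangle_{Y,M}=a(\bar{u},v)+\langle\bar{\lambda},v\rangle_{X',X}=(f,v)_X$ for every $v\in X$. For (\ref{l2:vf2}), take any $\mu\in\Lambda$ and set $\bar{\mu}:=B^t\mu\in X'$. Using (\ref{uty}) and $\mu\in\Lambda$, we obtain $\langle\bar{\mu},v\rangle_{X',X}=\langle\mu,\gamma v\rangle_{Y,M}\le\int_{\Gamma_3}g|\gamma v|\,d\Gamma$ for all $v\in X$, hence $\bar{\mu}\in\bar{\Lambda}$ by (\ref{Lbis}). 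Applying (\ref{l2:vf3}) with this $\bar{\mu}$ and rewriting through (\ref{uty}) and (\ref{ll}) gives $\langle\mu-\widetilde{\lambda},\gamma\bar{u}\rangle_{Y,M}=\langle\bar{\mu}-\bar{\lambda},\bar{u}\rangle_{X',X}\le 0$. Thus $(\bar{u},\widetilde{\lambda})$ solves Problem \ref{vf2}, and uniqueness closes the argument.

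The step I expect to carry the real content is the translation between the two constraint sets $\Lambda\subset Y$ and $\bar{\Lambda}\subset X'$: that $B^t$ carries $\Lambda$ into $\bar{\Lambda}$, and conversely that $\bar{\lambda}\in\bar{\Lambda}$ forces its preimage $\widetilde{\lambda}$ to lie in $\Lambda$. This is precisely where Lemma \ref{lemau} and the preceding Proposition (yielding $\bar{\lambda}\in(Ker B)^0$), combined with the isomorphism (\ref{izo}), are indispensable, since they ensure both that $\widetilde{\lambda}$ exists and that the identity $\langle\widetilde{\lambda},\gamma\,\cdot\,\rangle_{Y,M}=\langle\bar{\lambda},\cdot\,\rangle_{X',X}$ holds on all of $X$. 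Once that dictionary is available, the remainder is a matter of substituting into (\ref{l1:vf3})--(\ref{l2:vf3}) and comparing with the definitions of $\Lambda$ and $\bar{\Lambda}$.
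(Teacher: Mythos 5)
Your proposal is correct, and its overall architecture coincides with the paper's: transport $\bar{\lambda}$ to $\widetilde{\lambda}=\left(B^t\right)^{-1}\bar{\lambda}$ via the isomorphism (\ref{izo}), establish the dictionary $\langle\widetilde{\lambda},\gamma v\rangle_{Y,M}=\langle\bar{\lambda},v\rangle_{X',X}$, check $\widetilde{\lambda}\in\Lambda$, verify that $(\bar{u},\widetilde{\lambda})$ satisfies both relations of Problem \ref{vf2}, and invoke uniqueness from Theorem \ref{t:2}. The one place where you genuinely diverge is the verification of the inequality (\ref{l2:vf2}). The paper proves it through the complementarity identity $\langle\bar{\lambda},\bar{u}\rangle_{X',X}=j(\bar{u})$, which it imports from the proof of Theorem \ref{r:1} (via the identification $u_0=\bar{u}$, $\lambda_0=\bar{\lambda}$ and relation (\ref{q1})), and then combines with $\langle\mu,\gamma\bar{u}\rangle_{Y,M}\leq j(\bar{u})$ for $\mu\in\Lambda$. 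You instead push an arbitrary test multiplier $\mu\in\Lambda$ forward to $\bar{\mu}:=B^t\mu$, check via (\ref{uty}) and the definitions (\ref{L}), (\ref{Lbis}) that $\bar{\mu}\in\bar{\Lambda}$, and test (\ref{l2:vf3}) with it; the chain $\langle\mu-\widetilde{\lambda},\gamma\bar{u}\rangle_{Y,M}=\langle\bar{\mu}-\bar{\lambda},\bar{u}\rangle_{X',X}\leq 0$ is then immediate. Both routes are sound; yours has the advantage of making Theorem \ref{r:2} logically independent of Theorem \ref{r:1} and of the variational inequality formulation (Problem \ref{vf1}) altogether, at the small price of the extra observation that $B^t$ maps $\Lambda$ into $\bar{\Lambda}$ — an observation which is itself of some interest, since together with the membership step it shows that $B^t$ restricts to a bijection between the relevant parts of the two constraint sets.
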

\begin{proof}
Due  to (\ref{ll}) and (\ref{uty}),
\begin{equation}\label{nr2}
\langle \bar{\lambda},v\rangle_{X',X}= \langle B^t\widetilde{\lambda},v\rangle_{X',X}=\langle Bv,\widetilde{\lambda}\rangle_{Y',Y}=b(v,\widetilde{\lambda})=\langle\widetilde{\lambda},\gamma v\rangle_{Y,M}.
\end{equation}
As $\bar{\lambda}\in\bar{\Lambda},$ we conclude easily that $\widetilde{\lambda}$ is an element of $\Lambda$; see (\ref{L}) and (\ref{Lbis}).
Moreover, due to (\ref{l1:vf3}) and (\ref{nr2}),
\[a(\bar{u},v)+\langle\widetilde{\lambda},\gamma v\rangle_{Y,M}=(f,v)_X\quad\mbox{ for all }v\in X.\]
On the other hand, for all $\mu\in \Lambda,$ we have
\begin{equation}\label{k1}
\langle \mu,\gamma \bar{u}\rangle_{Y,M}\leq j(\bar{u}).
\end{equation}
Recall that, according to (\ref{q1}), as $u_0=\bar{u}$ and $\lambda_0=\bar{\lambda},$ see Theorem \ref{r:1}, then
\begin{equation}\label{nr3}
\langle\bar{\lambda},\bar{u}\rangle_{X',X}=j(\bar{u}).
\end{equation}
By (\ref{nr2}) and (\ref{nr3}),
\begin{equation}\label{k2}
\langle \widetilde{\lambda},\gamma \bar{u}\rangle_{Y,M}=\langle B^t\widetilde{\lambda},\bar{u}\rangle_{X',X}=\langle \bar{\lambda},\bar{u}\rangle_{X',X}= j(\bar{u}).
\end{equation}
By (\ref{k1}) and (\ref{k2}) we obtain
\[\langle\mu-\widetilde{\lambda}, \gamma \bar{u}\rangle_{Y,M}\leq 0 \quad\mbox{ for all }\mu\in \Lambda.\]
Thus, $(\bar{u},\widetilde{\lambda})\in X\times \Lambda$ verifies Problem \ref{vf2}. As Problem \ref{vf2} has a unique solution $(u,\lambda)\in X\times \Lambda,$ we conclude this theorem.
\end{proof}
\begin{remark} Let $u_0$ be the unique solution of Problem \ref{vf1},  $(u,\lambda)\in X\times \Lambda$ be the unique solution of Problem \ref{vf2} and $(\bar{u},\bar{\lambda})\in X\times \bar{\Lambda}$ be the unique solution of Problem \ref{vf3}.
Then,
\[u_0=u=\bar{u}\]
and
\[B^t\lambda=\bar{\lambda},\]
where $B^t$ is the operator defined by (\ref{uty}). Moreover,
\begin{equation}\label{ecuy}
\langle \lambda,\gamma v\rangle_{Y,M}=\langle \bar{\lambda},v\rangle_{X',X}\leq j(v)\quad\mbox{ for all } v\in X,
\end{equation}
 \begin{eqnarray}\label{i3}
 \langle\lambda,\gamma u\rangle_{Y,M}=\langle\bar{\lambda},u\rangle_{X',X}=j(u)=\int_{\Gamma_3}g|\gamma u|\,dx,
 \end{eqnarray}
and
\begin{equation}\label{ecuyy}
a(u,v)+\langle \bar{\lambda},\gamma v\rangle_{X',X}=(f,v)_X \quad\mbox{ for all } v\in X.
\end{equation}
\end{remark}

\section{Recovery of the strong formulation}

According to the study in the previous sections, Problems \ref{vf2} and \ref{vf3} are mixed variational formulations of Problem \ref{bvp}, each of them having a unique solution $(u,\lambda)\in X\times \Lambda$ and $(u,\bar{\lambda})\in X\times \bar{\Lambda}$, respectively.

How the weak solutions  $(u,\lambda)\in X\times \Lambda$ and $(u,\bar{\lambda})\in X\times \bar{\Lambda}$ are related to Problem \ref{bvp}?

Let us assume everywhere below that the data and the weak solutions are smooth enough. First of all we notice that $u\in X$ implies $u(\bx)=0$ on $\Gamma_1,$ so (\ref{c1bis}) is recovered. Next, let us test in (\ref{l1:vf2}) with $v\in C^{\infty}_c(\Omega).$ Since $\gamma v(\bx)=0$ on $\Gamma$ then, by (\ref{l1:vf2}) we obtain \[a(u,v)=\int_{\Omega} f_0 \, v\,dx.\] And from this, keeping in mind (\ref{a}), by using the First Green's Formula and the Fundamental Lemma in the Calculus of Variations  we are led to (\ref{c1}). By standard arguments we are also lead to (\ref{c2}).
The pointwise relations (\ref{c1})-(\ref{c2}) are helpful for some error analysis of numerical schemes, see e.g. pages 170-172 in \cite{HS02}; if $\Gamma_2=\varnothing$ the analysis is easier.

What about the frictional contact conditions (\ref{c3})?

By means of (\ref{ecuy}), (\ref{ecuyy}), (\ref{a}), (\ref{f}), the First Green's Formula and (\ref{c1})-(\ref{c2}) we obtain
\[\langle \bar{\lambda},v\rangle_{X',X}=\langle \lambda,\gamma v\rangle_{Y,M}=-\int_{\Gamma_3}\xi\frac{\partial u}{\partial \nu}(\bx)\gamma v(\bx)\,d\Gamma\quad\mbox{ for all }v\in X.\]
Notice that everywhere in this section $u=\gamma u$ on $\Gamma_3$ due to the smoothness assumption.
Keeping in mind (\ref{ecuy}) we can write
\[-\int_{\Gamma_3}\xi \frac{\partial u}{\partial \nu}(\bx)\gamma v(\bx)\leq \int_{\Gamma_3}g|\gamma v(\bx)|\,d\Gamma\quad\mbox{ for all }v\in X.\]
Moreover, using (\ref{i3}) we have
\[-\int_{\Gamma_3}\xi \frac{\partial u}{\partial \nu}(\bx) u(\bx)= \int_{\Gamma_3}g| u(\bx)|\,d\Gamma.\]
Therefore, for all $v\in X,$
\[\int_{\Gamma_3}\xi \frac{\partial u}{\partial \nu}(\bx)(\gamma v(\bx)- u(\bx))\,d\Gamma\geq -\int_{\Gamma_3}g(|\gamma v(\bx)|-|u(\bx)|)\,d\Gamma.\]

Below, we will test with functions
$v=\pm t w+u,$ where $t\in (0,\infty)$ and $w\in X$ such that  $\gamma w$ is smooth enough, with compact support on $\Gamma_3,$ and $\gamma w\geq 0$ a.e. on $\Gamma_3.$

If $v=tw+u$ then
\begin{equation}\label{lll}
\int_{\Gamma_3}\xi \frac{\partial u}{\partial \nu}(\bx)\gamma w(\bx)\,d\Gamma\geq -g\int_{\Gamma_3}\frac{|\gamma (tw+u)(\bx)|-|\gamma u(\bx)|}{t}\,d\Gamma.
\end{equation}
We observe that, a.e. on $\Gamma_3,$
$$\lim_{t\downarrow 0}\frac{|\gamma (tw+u)(\bx)|-|\gamma u(\bx)|}{t}=
\left\{
\begin{array}{ll}
\gamma w(\bx) &\mbox{ if }  u(\bx)=0;\\
\gamma w(\bx)\operatorname{sgn}(u(\bx))&\mbox{ if } u(\bx)\neq 0.
\end{array}\right.
$$
 Let us introduce $F_+:\mathbb R\to \mathbb R,$
 $$F_+(r)=
\left\{
\begin{array}{cl}
1 &\mbox{ if }  r=0;\\
\operatorname{sgn}(r)&\mbox{ if } r\neq 0.
\end{array}\right.
$$
Due to the  Lebesgue's dominated convergence, after we pass to the limit in (\ref{lll}), as $t\to 0$, we obtain
\begin{equation*}
\int_{\Gamma_3} \left(\xi\frac{\partial u}{\partial \nu}(\bx)+g\,F_+(u(\bx))\right)\gamma w(\bx)\,d\Gamma\geq 0.
\end{equation*}
 And then, by classical arguments of mathematical analysis, we deduce that, a.e. on $\Gamma_3,$
 \[\xi\frac{\partial u}{\partial \nu}(\bx)+g\,F_+(u(\bx))\geq 0\]
 which yields
\begin{eqnarray}
\xi \frac{\partial  u}{\partial \nu}(\bx)+g\frac{u(\bx)}{| u(\bx)|}\geq 0\mbox{ if } u(\bx)\neq 0;\label{I}\\
\xi \frac{\partial u}{\partial \nu}(\bx)+g\geq 0\mbox{ if } u(\bx)=0.\label{II}
\end{eqnarray}

If $v=-tw+u,$ then
\begin{equation*}\label{llll}
\int_{\Gamma_3}\xi \frac{\partial u}{\partial \nu}(\bx)\gamma w(\bx)\,d\Gamma\leq g\int_{\Gamma_3}\frac{|\gamma (-tw+u)(\bx)|-|\gamma u(\bx)|}{t}\,d\Gamma
\end{equation*}
and, a.e. on $\Gamma_3,$
$$\lim_{t\downarrow 0}\frac{|\gamma (-tw+u)(\bx)|-|u(\bx)|}{t}=
\left\{
\begin{array}{ll}
\gamma w(\bx) &\mbox{ if }  u(\bx)=0;\\
-\gamma w(\bx)\operatorname{sgn}(u(\bx))&\mbox{ if } u(\bx)\neq 0.
\end{array}\right.
$$
 Let us introduce $F_-:\mathbb R\to \mathbb R,$
 $$F_-(r)=
\left\{
\begin{array}{cl}
1 &\mbox{ if }  r=0;\\
-\operatorname{sgn}(r)&\mbox{ if } r\neq 0.
\end{array}\right.
$$
Therefore,
\begin{equation*}
\int_{\Gamma_3}\left(\xi \frac{\partial u}{\partial \nu}(\bx)-g\,F_-(u(\bx))\right)\gamma w(\bx)\,d\Gamma\leq 0.
\end{equation*}
Thus, a.e. on $\Gamma_3,$
\begin{eqnarray}
\xi \frac{\partial  u}{\partial \nu}(\bx)+g\frac{ u(\bx)}{| u(\bx)|}\leq 0\mbox{ if } u(\bx)\neq 0;\\\label{III}
\xi \frac{\partial u}{\partial \nu}(\bx)-g\leq 0\mbox{ if } u(\bx)=0.\label{IV}
\end{eqnarray}
The frictional interaction condition (\ref{c3}) is now a consequence of (\ref{I})-(\ref{IV}).

The regularity results remain open.

\vskip 10mm

\noindent\textbf{Acknowledgements} This project has received funding from the European Union's Horizon 2020
Research and Innovation Programme under the Marie Sklodowska-Curie
Grant Agreement No 823731 CONMECH.

\end{document}